\newtheorem{Thm}{Theorem}[section]
\newtheorem{Prop}[Thm]{Proposition}
\newtheorem{Cor}[Thm]{Corollary}
\newtheorem{Ques}[Thm]{Question}
\newcommand{\Ext}{\mathrm{Ext}}
\newcommand{\obs}{\mathrm{obs}}
\newcommand{\Ker}{\mathrm{Ker}}
\newcommand{\rank}{\mathrm{rank}}
\newcommand{\Hom}{\mathrm{Hom}}
\newcommand{\sht}{\mathrm{ht}^s}
\newcommand{\length}{\mathrm{length}}
\newcommand{\height}{\mathrm{ht}}
\newcommand{\Het}{H_{\text{\'et}}}
\newcommand{\C}{\mathbb{C}}
\newcommand{\G}{\mathbb{G}}
\renewcommand{\O}{\mathcal{O}}
\theoremstyle{definition}
\newtheorem{Def}[Thm]{Definition}
\newtheorem{Rem}[Thm]{Remark}
\author{Fuetaro Yobuko}
\title{Quasi-Frobenius splitting and lifting of Calabi-Yau varieties in characteristic $p$}
\date{}
\begin{document}
\maketitle

\begin{abstract}
Generalizing the notion of Frobenius-splitting, we prove that every finite height Calabi-Yau variety defined over an algebraically closed field of positive characteristic can be lifted to the ring of Witt vectors of length two.
\end{abstract}

\section{Introduction}

Let $k$ be an algebraically closed field of characteristic $p >0$.
A smooth proper variety $X$ over $k$ of dimension $n$ is said to be \textit{Calabi-Yau} if $\omega _X \simeq \mathcal{O}_X$ and $H^i(X, \mathcal{O}_X) = 0$ for $0 < i < n$.
It is known that there are some Calabi-Yau threefolds in positive characteristic which have no liftings to characteristic zero (\cite{H}, \cite{S}, \cite{HIS1}, \cite{HIS2}, \cite{Schoen}, \cite{CSt}).
This phenomenon is in contrast to the two dimensional case, i.e., every $K3$ surface over $k$ admits a lifting to characteristic zero (\cite{RS}, \cite{D}).

Furthermore, the Bogomolov-Tian-Todorov theorem says that the deformation functor of a Calabi-Yau variety over $\C$ has no obstructions (\cite{B}, \cite{To}, \cite{Ti}, \cite{R}, \cite{K}).
The non-existence of liftings to characteristic zero says that, in general, the deformation functor of a Calabi-Yau variety in mixed characteristic has a non-trivial obstruction.
In \cite{K}, Kawamata developes the \textit{$T^1$-lifting} method which is more algebraic than the methods in \cite{B}, \cite{To} and \cite{Ti} to show the smoothness of a deformation functor in characteristic zero.
Schr\"oer \cite{S1} studies the $T^1$-lifting property in mixed characteristic using divided power structures and Ekedahl and Sheperd-Barron \cite{ES} introduces a similar concept, \textit{(divided power) tangent lifting} property.
In particular, they give some conditions to ensure the smoothness of deformations of Calabi-Yau varieties.
But the sufficient conditions are hard to check in general and the situations in positive or mixed characteristic are not completely understood.

Recall that, for a Calabi-Yau variety $X$ over $k$, one has an invariant, the \textit{Artin-Mazur height} $\height (X)$, which takes a value in positive integers or infinity.
For the definition and its properties, see \S 3.

What is important for us is that, for all Calabi-Yau varieties which are known to be non-liftable, the Artin-Mazur heights are infinity.
Therefore, one can ask the following:

\begin{Ques}
If the Artin-Mazur height of a Calabi-Yau variety over $k$ is finite, does it admit a lifting to characteristic zero?
\end{Ques}

Let $W_2(k)$ be the ring of Witt vectors of length two of $k$.
Our main result gives a partial answer to the above question:

\begin{Thm}\label{main}
Let $X$ be a Calabi-Yau variety over $k$.
If the Artin-Mazur height of $X$ is finite, then $X$ admits a smooth lifting to $W_2(k)$. 
\end{Thm}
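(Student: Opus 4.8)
The plan is to route the proof through the \emph{quasi-Frobenius-splitting height} $\sht(X)$ --- the invariant generalizing $F$-splitting, so that $\sht(X)=1$ exactly when $X$ is Frobenius-split --- and to reduce Theorem~\ref{main} to two statements: (a) for a Calabi--Yau variety $X$ one has $\sht(X)=\height(X)$, so that finite Artin--Mazur height is equivalent to being quasi-$F$-split; and (b) any quasi-$F$-split smooth proper variety admits a smooth lifting to $W_2(k)$. Granting (a) and (b), the theorem is immediate.

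For (a): by definition $\sht(X)\le n$ means that a canonical short exact sequence $0\to\mathcal{O}_X\xrightarrow{\,\Phi_{X,n}\,}\mathcal{Q}_{X,n}\to\mathcal{B}_{X,n}\to 0$ of sheaves splits, where $\mathcal{Q}_{X,n}$ and $\Phi_{X,n}$ are assembled from the Witt-vector sheaves $W_m\mathcal{O}_X$ and their maps $F$, $V$, $R$, and where for $n=1$ this is the usual $0\to\mathcal{O}_X\xrightarrow{F}F_*\mathcal{O}_X\to\mathcal{B}^1\Omega_X\to 0$ whose splitting is $F$-splitness. I would take cohomology of this sequence and use $\omega_X\simeq\mathcal{O}_X$ together with $H^i(X,\mathcal{O}_X)=0$ for $0<i<n$ to see that the obstruction to splitting $\Phi_{X,n}$ is concentrated in degree $n$ and is detected by the Frobenius on $H^n(X,W_\bullet\mathcal{O}_X)$. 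Since $H^n(X,W\mathcal{O}_X)$ is the Dieudonn\'e module of the Artin--Mazur formal group $\widehat{\Phi}^n_X$ (and the $W_m$-versions its truncations), comparing the resulting condition with the definition of $\height(X)$ in terms of the $F$-action on these modules yields $\sht(X)=\height(X)$; in particular a finite-height Calabi--Yau is quasi-$F$-split of level $n=\height(X)$. Properness is used here for finiteness of the relevant cohomology and pro-representability of $\widehat{\Phi}^n_X$.

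For (b): put $n=\sht(X)<\infty$ and fix a retraction $\varphi\colon\mathcal{Q}_{X,n}\to\mathcal{O}_X$ of $\Phi_{X,n}$. The obstruction to lifting $X$ over $W_2(k)$ is a class $\obs(X)\in\Ext^2_X(\Omega^1_X,\mathcal{O}_X)=H^2(X,T_X)$; using $\omega_X\simeq\mathcal{O}_X$ one has $T_X\simeq\Omega^{n-1}_X$, but as the Hodge numbers of a Calabi--Yau show this group is in general nonzero, so $\obs(X)$ does not vanish for dimension reasons. Instead I would argue that $\varphi$ forces $\obs(X)=0$: the image of $\obs(X)$ under $(\Phi_{X,n})_*$ becomes trivial once one passes to the Frobenius twist built into $\Phi_{X,n}$ --- ultimately because the canonical map $F^*\Omega^1_X\to\Omega^1_X$ is zero, so deformation obstructions die after the relevant Frobenius pullback --- and the retraction $\varphi$ then lets one pull this triviality back down to $\obs(X)$ itself. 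For $n=1$ this specializes to the assertion that an $F$-split smooth proper variety lifts to $W_2(k)$.

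The hard part will be (b): turning the algebraic datum $\varphi$ into the vanishing of a concrete deformation obstruction. This requires a precise description of $\obs(X)$ through the truncated de Rham--Witt complex $W_2\Omega^\bullet_X$ (equivalently, via the Deligne--Illusie picture of $W_2(k)$-liftability and the conjugate spectral sequence) and then a careful verification that $\varphi$ is compatible with $F$, $V$ and $R$ on $W_\bullet\mathcal{O}_X$ and $W_\bullet\Omega^1_X$ in exactly the manner needed to kill that class; it is this interplay of Frobenius and Verschiebung that I expect to be delicate. Step (a), by comparison, is essentially linear algebra on $H^n(X,W_\bullet\mathcal{O}_X)$ once the definition of $\sht$ is in place.
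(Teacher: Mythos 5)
Your architecture matches the paper's exactly: reduce the theorem to (a) $\sht(X) = \height(X)$ for Calabi--Yau $X$ and (b) every smooth quasi-Frobenius-split scheme lifts to $W_2(k)$; the paper proves these as Theorems 4.4 and 4.5. For (a), what you describe as ``essentially linear algebra on $H^n(X, W_\bullet\O_X)$'' is, in the paper, a combination of Serre duality, $\omega_X \simeq \O_X$, and van der Geer--Katsura's dimension formula $\dim H^n(X, B_m\Omega_X) = \min\{m, \height(X)-1\}$, applied to the Ext long exact sequence of $0 \to F_*B_{m-1}\Omega_X \to B_m\Omega_X \xrightarrow{C^{m-1}} B_1\Omega_X \to 0$ to decide when $\mathcal{E}_m = (C^{m-1})^*\mathcal{E}_1$ dies in $\Ext^1(B_m\Omega_X, \O_X)$. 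Your gesture points at the right ingredients but does not actually set up this computation.

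The real gap is in (b). Your plan --- show $(\Phi_{X,m})_*\obs_X = 0$ and retract by $\varphi$ --- is logically equivalent to showing that $\obs_X$ lies in the image of the connecting map $\delta_m\colon \Ext^1(\Omega_X, B_m\Omega_X) \to \Ext^2(\Omega_X, \O_X)$ attached to $\mathcal{E}_m$. The heuristic ``$dF=0$ kills obstructions after Frobenius pullback'' gives at best $F_*\obs_X = 0$, i.e.\ the $m=1$ case, and even there only once one knows $\obs_X$ is in the image of $\delta_1$; it does not by itself place $\obs_X$ in the image of $\delta_m$ for $m>1$. The missing input is the explicit obstruction theory the paper develops (its Proposition 2.2, following Nori--Srinivas and Srinivas): (i) the obstruction $\obs_{X,F}$ to lifting the pair $(X,F)$ equals the class of $0 \to B_1\Omega_X \to Z_1\Omega_X \xrightarrow{C} \Omega_X \to 0$ in $\Ext^1(\Omega_X, B_1\Omega_X)$; (ii) $\obs_X = (\obs_{X,F}, \mathcal{E}_1)$ under the Yoneda pairing; and (iii) $\obs_{X,F} = C^{m-1}_*x$ where $x$ is the class of $0 \to B_m\Omega_X \to Z_m\Omega_X \xrightarrow{C^m} \Omega_X \to 0$. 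Adjunction then gives $\obs_X = (C^{m-1}_*x, \mathcal{E}_1) = (x, \mathcal{E}_m) = 0$ once $\mathcal{E}_m$ splits. You correctly flag this as ``the hard part,'' but the crucial idea --- that $\obs_X$ factors through the level-$m$ Cartier operator extension class, not merely through some Frobenius twist --- is absent from the proposal and is precisely what makes the splitting of $\mathcal{E}_m$ (rather than of $\mathcal{E}_1$) suffice.
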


One can apply results of Deligne and Illusie \cite{DI} to $X$.
In particular, we have the following:

\begin{Cor}
If $\dim(X) \leq p$, the Hodge to de Rham spectral sequence of $X$ degenerates at $E_1$.
\end{Cor}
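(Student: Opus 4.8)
The plan is to read this off from Theorem~\ref{main} using the theory of Deligne and Illusie \cite{DI}. By Theorem~\ref{main}, the finiteness of the Artin--Mazur height of $X$ provides a smooth proper scheme $\widetilde{X}$ over $W_2(k)$ with $\widetilde{X}\otimes_{W_2(k)}k\cong X$; thus $X$ is a smooth proper $k$-scheme admitting a lifting to $W_2(k)$. The first step is then to quote \cite{DI}: such a lifting produces a quasi-isomorphism $\bigoplus_{0\le i<p}\Omega^i_{X'/k}[-i]\xrightarrow{\ \sim\ }\tau_{<p}F_*\Omega^\bullet_{X/k}$ in the derived category of $\mathcal{O}_{X'}$-modules, where $F\colon X\to X'$ is the relative Frobenius and $\tau$ is the canonical truncation.

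The second step is the standard bookkeeping turning this decomposition into $E_1$-degeneration under the hypothesis $\dim X\le p$. Passing to hypercohomology in the distinguished triangle $\tau_{<p}F_*\Omega^\bullet_{X/k}\to F_*\Omega^\bullet_{X/k}\to\tau_{\ge p}F_*\Omega^\bullet_{X/k}\xrightarrow{+1}$, identifying cohomology sheaves by the Cartier isomorphism, and using that the third term has cohomology concentrated in degrees $\ge p$ (hence contributes nothing in total degrees below $p$), one obtains $\bigoplus_{i+j=m}H^j(X',\Omega^i_{X'/k})\cong H^m_{\mathrm{dR}}(X/k)$ for every $m\le p-1$. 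Since dimensions are unaffected by the Frobenius twist, this says $\sum_{i+j=m}h^{i,j}(X)=\dim H^m_{\mathrm{dR}}(X/k)$ for $m\le p-1$, i.e.\ the Hodge--de Rham spectral sequence degenerates in total degrees $<p$. If $\dim X<p$ this already covers everything. If $\dim X=p$, I would propagate the equality to total degrees $>p$ by combining Poincaré duality on $H^\bullet_{\mathrm{dR}}(X/k)$ with Serre duality on the Hodge groups (through $\Omega^{p-i}_X\cong(\Omega^i_X)^\vee\otimes\omega_X$), and then obtain the one remaining equality, in total degree $p$, from the coincidence of Euler characteristics $\sum_m(-1)^m\dim H^m_{\mathrm{dR}}(X/k)=\sum_{i,j}(-1)^{i+j}h^{i,j}(X)$ together with the general inequality $\dim H^m_{\mathrm{dR}}(X/k)\le\sum_{i+j=m}h^{i,j}(X)$.

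I do not expect any real obstacle in the corollary itself: it is formal once Theorem~\ref{main} is in hand, and the $E_1$-degeneration under the stated hypotheses is one of the standard consequences of \cite{DI}. All the substance is in Theorem~\ref{main}; the corollary simply transports the existence of the $W_2(k)$-lift through the Deligne--Illusie decomposition.
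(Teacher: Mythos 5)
Your argument is correct, and it coincides with what the paper intends: the paper gives no proof of this corollary, simply asserting that it follows from Deligne--Illusie once Theorem~\ref{main} supplies the lift to $W_2(k)$. Your reconstruction — the decomposition $\bigoplus_{i<p}\Omega^i_{X'}[-i]\simeq\tau_{<p}F_*\Omega^\bullet_X$ yielding degeneration in total degree $<p$ (and in all degrees when $\dim X<p$, since then $\tau_{<p}F_*\Omega^\bullet=F_*\Omega^\bullet$), and the boundary case $\dim X=p$ via Serre/Poincar\'e duality plus the Euler-characteristic comparison with the general inequality $\dim H^m_{\mathrm{dR}}\le\sum_{i+j=m}h^{i,j}$ — is exactly the content of \cite[Cor.~2.4 and the following remarks]{DI}.
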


\begin{Rem}
In \cite{E2}, Ekedahl proves that Hirokado and Schr\"oer's non-liftable Calabi-Yau varieties admit no liftings to $W_2(k)$.
\end{Rem}

\begin{Rem}
Theorem \ref{main} supports a conjecture \cite[Conjecture 7.4.2]{J2} by Joshi on the existence of liftings of Calabi-Yau threefolds.
Let $X$ be a smooth projective Calabi-Yau threefold over $k$.
We denote by $c_3(X)$ the third Chern number of $X$ and by $b_i(X)$ the $i$-th Betti number of $X$.
Based on a detailed study of Hodge-Witt numbers $h_W^{i, j}$ (for their definition, see \cite[Chapter IV]{E1}), he conjectures that $X$ lifts to characteristic zero if and only if $H^0(X, \Omega_X)=0$ and $c_3(X) \leq 2 b_2(X)$. 

The later condition is equivalent to $h_W^{1, 2} \geq 0$ by \cite[Proposition 7.2.1]{J2}.
Note that this is the same as $b_3(X)$ being nonzero by \cite[Theorem 7.3.1]{J2}. 
Furthermore, by \cite[Theorem 6.1]{J1}, we know that the Artin-Mazur height of $X$ is finite if and only if $X$ is Hodge-Witt, that is, the slope spectral sequence degenerates at $E_1$-stage.
Again by \cite[Theorem 7.3.1]{J2}, we know that Hodge-Wittness implies the non-negativity of $h_W^{1, 2}$.
\end{Rem}

Theorem \ref{main} is known for Calabi-Yau varieties of height one.
Recall that a variety $X$ over $k$ is \textit{Frobenius-split} if the absolute Frobenius map
\[
F \colon \O_X \to F_{\ast}\O_X
\] 
splits as $\O_X$-modules. 
This is introduced in \cite{MR} and has applications to the representation theory of semisimple algebraic groups.
Furthermore, the same notion is actively studied in the theory of singularities (it is also called \textit{F-pure}).
For a Calabi-Yau variety $X$, the height of $X$ is one if and only if $X$ is Frobenius-split. 
In \cite{J1}, it is proved that any smooth Frobenius-split variety admits a flat lifting to $W_2(k)$.

A main ingredient of this paper is to introduce the notion of \textit{quasi-Frobenius-split} variety.
More precisely, we first define a new invariant $\sht(X)$, the \textit{Frobenius-split height} of a variety $X$, which quantifies the notion of Frobenius-splitting.
Then, we define that $X$ is quasi-Frobenius-split when $\sht(X)$ is finite.
In \S 4, we will show that any smooth quasi-Frobenius-split variety admits a lifting to $W_2(k)$ and, for a Calabi-Yau variety, the Artin-Mazur height is equal to the Frobenius-split height. 
For the first statement, the proof is done by the same line of \cite{J1}.
The second assertion is based on the study of the Artin-Mazur height of Calabi-Yau varieties by van der Geer and Katsura \cite{GK}.

\section*{Notations}
Throughout this paper, $k$ denotes an algebraically closed field of characteristic $p>0$.
We denote by $W(k)$ the ring of Witt vectors of $k$ and by $W_m(k)$ the ring of Witt vectors of length $m \geq 1$.
Note that $W_m(k) = W(k)/p^mW(k)$.
For a scheme $X$, $\Hom_{\O_X}$ and $\Ext_{\O_X}^i$ between $\O_X$-modules are shortened as $\Hom$ and $\Ext^i$.
For a scheme $X$ over $k$, $\Omega_X= \Omega_{X/k}^1$ denotes the sheaf of K\"ahler differentials on $X$.

\section{Differential calculus  and deformation theory in characteristic $p$}
Let $X$ be a smooth scheme over $k$ and $F$ the absolute Frobenius of $X$.
We first recall some properties of Illusie sheaves and Serre's Witt vector sheaves.

Let $B_1\Omega_X$ (resp. $Z_1\Omega _X$) be the sheaf of exact (resp. closed) one forms on $X$.
Since $d(f^pg)=f^pdg$, we may regard $B_1\Omega_X$ and $Z_1\Omega_X$ as $\O_X$-submodules of $F_{\ast}\Omega_X$.
We have the Cartier operator $C$ which fits into the following exact sequence;
\begin{align*}
0 \to B_1\Omega _X \to Z_1\Omega _X \xrightarrow{C} \Omega _X \to 0. 
\label{Cartier operator exact seq}
\end{align*}

Following Illusie \cite{L}, for $m \geq 1$, we define abelian subsheaves $B_m\Omega _X, Z_m\Omega _X$ of $\Omega_X$ inductively by  $B_{m+1}\Omega _X := C^{-1}(B_{m}\Omega _X)$, $Z_{m+1}\Omega _X := C^{-1}(Z_{m}\Omega _X)$.
We regard $B_{m}\Omega _X$ and $Z_{m}\Omega _X$ as $\mathcal{O}_X$-modules so that the inclusions $B_{m}\Omega _X \subset F_{\ast}^m\Omega _X$ and $Z_{m}\Omega _X \subset F_{\ast}^m\Omega _X$ are morphisms of $\mathcal{O}_X$-modules, that is, $f \in \O_X$ acts on $\omega \in B_m\Omega_X$ (or $ \omega \in Z_m\Omega_X$) as $f^{p^m}\omega$.

Let $W_m\O_X$ be the sheaf of Witt vectors of length $m$.
This is a sheaf of rings on $X$ and a local section of $W_m\O_X$ is expressed as an $m$-tuple of regular functions on $X$.
Note that $W_1\O_X$ is $\O_X$.
For each $m \geq 1$, we have three operators
\begin{align*}
&F \colon W_m\O_X \to W_m\O_X ; (f_0, \dots, f_{m-1})\mapsto (f_0^p, \dots, f_{m-1}^p), \\
&V \colon W_m\O_X \to W_{m+1}\O_X ; (f_0, \dots, f_{m-1}) \mapsto (0, f_0, \dots, f_{m-1}), \\
&R \colon W_{m+1}\O_X \to W_{m}\O_X ; (f_0, \dots, f_{m}) \mapsto (f_0, \dots, f_{m-1}).
\end{align*}

The Illusie sheaf $B_m\Omega_X$ is related to $W_m\O_X$ via the following exact sequence, due to Serre \cite{Ser};
\[
0 \to W_m\O_X \xrightarrow{F} F_{\ast}W_m\O_X \xrightarrow{D_m} B_m\Omega_X \to 0.
\]
Here $D_m$ is defined by the formula
\[
D_m \colon (f_0, \dots, f_{m-1})\mapsto df_{m-1} + f_{m-2}^{p-1}df_{m-2} + \cdots + f_0^{p^{m-1}-1}df_0.
\]

Next we recall the lifting theory by Nori-Srinivas as in Appendix of \cite{MS}.
By a \textit{lifting} of $X$ to $W_2(k)$, we mean a Cartesian diagram
\[
\xymatrix{
X \ar[d] \ar[r] & \tilde{X} \ar[d] \\
 \mathrm{Spec}(k) \ar[r] & \mathrm{Spec}(W_2(k)) }
\]
where the right vertical arrow is flat and the bottom horizontal arrow is induced by the natural surjection $W_2(k) \twoheadrightarrow k$.
By a \textit{lifting of the pair} $(X, F)$ to $W_2(k)$, we mean a pair $(\tilde{X}, \tilde{F})$ where $\tilde{X}$ is a lifting of $X$ and $\tilde{F} \colon \tilde{X} \to \tilde{X}$ is a morphism whose restriction to $X$ is $F$ and which is compatible with the Frobenius of $W_2(k)$.

The obstruction class $\obs_X$ for existence of a lifting of $X$ to $W_2(k)$ lives in $\Ext^2(\Omega_X, \O_X)$. 
Furthermore, in the appendix of \cite{MS}, it is proved that one has the obstruction class $\obs_{X, F}$ in $\Ext^1(\Omega_X, B_1\Omega_X)$ for existence of a lifting of the pair $(X, F)$ to $W_2(k)$. 
This means that $\obs_{X, F}$ is zero if and only if there exists a lifting of the pair $(X, F)$ to $W_2(k)$. 

In the following, we will use the same symbol for a derivation of $\O_X$ and the corresponding homomorphism from $\Omega_X$.

\begin{Prop}\label{local}
Let $\tilde{X}$ be a lifting of $X$ to $W_2(k)$.
\begin{description}
\item{(i)}
Let $\varphi $ be an infinitesimal automorphism of $\tilde{X}$, that is, an automorphism of $\tilde{X}$ over $W_2(k)$ which induces the identity on $X$.
Then there exists $\psi \in \mathrm{Hom}(\Omega_X, \O_X)$ such that $\varphi = id + p\psi$. 

\item{(ii)}
Let $\tilde{F}_1, \tilde{F}_2 $ be two liftings of the absolute Frobenius of $X$ to $\tilde{X}$.
Then there exists $\eta \in \mathrm{Hom}(\Omega_X, F_{\ast }\O_X)$ such that $\tilde{F}_1- \tilde{F}_2 =p\eta $.

\item{(iii)}
Let $\varphi$ and $\psi$ be as in (i). 
Let $\tilde{F}$ be a lifting of the absolute Frobenius.
Then $\varphi \tilde{F} \varphi ^{-1}$ is also a lifting of Frobenius and we have $\tilde{F} - \varphi \tilde{F} \varphi ^{-1} = p\psi ^p$.
\end{description}
\end{Prop}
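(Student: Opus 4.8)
The plan is to work locally. On an affine open $U=\Spec A$ of $X$ write $\tilde{X}|_U=\Spec\tilde{A}$ with $\tilde{A}$ flat over $W_2(k)$ and $\tilde{A}/p\tilde{A}=A$. Since $p^2=0$ in $W_2(k)$, flatness yields a canonical isomorphism $\iota\colon A\xrightarrow{\sim}p\tilde{A}$ of $A$-modules with $\iota(\bar a)=pa$; in particular multiplication by $p$ on $\tilde{A}$ factors through reduction modulo $p$. The common mechanism is then: a $W_2(k)$-algebra endomorphism of $\tilde{A}$ with prescribed reduction modulo $p$ differs from a fixed one by a map of the form $a\mapsto\iota(\delta(\bar a))$ for a unique set-map $\delta\colon A\to A$, and one checks that $\delta$ is additive, $k$-linear or $F$-semilinear as the case demands, and satisfies a Leibniz identity, hence is a $k$-derivation of $A$ into the relevant $A$-module. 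Because the $\delta$ produced this way are canonical, the local descriptions glue to the asserted global statements.

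For part (i): writing $\varphi^\sharp(a)=a+\iota(\psi(\bar a))$ defines $\psi\colon A\to A$, well defined since $\varphi^\sharp\equiv\mathrm{id}\pmod p$, so that replacing $a$ by $a+pb$ does not change the $\iota$-term; expanding $\varphi^\sharp(ab)=\varphi^\sharp(a)\varphi^\sharp(b)$ modulo $p^2$ gives the Leibniz rule $\psi(\overline{ab})=\bar a\,\psi(\bar b)+\bar b\,\psi(\bar a)$, and $W_2(k)$-linearity of $\varphi^\sharp$ on Teichm\"uller scalars gives $k$-linearity, so locally $\psi\in\Der_k(A)=\Hom(\Omega_X,\O_X)$ and $\varphi=\mathrm{id}+p\psi$. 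For part (ii): since $\tilde F_1,\tilde F_2$ both restrict to the absolute Frobenius they have the identity as underlying map of spaces, so $\tilde F_1^\sharp,\tilde F_2^\sharp$ are endomorphisms of $\tilde{A}$ with common reduction $a\mapsto a^p$; put $\tilde F_1^\sharp(a)-\tilde F_2^\sharp(a)=\iota(\eta(\bar a))$. Compatibility with the Frobenius $\sigma$ of $W_2(k)$, which fixes $p$ and sends $[\lambda]$ to $[\lambda^p]$, forces $\eta(\lambda\bar a)=\lambda^p\eta(\bar a)$, and expanding $\tilde F_i^\sharp(ab)=\tilde F_i^\sharp(a)\tilde F_i^\sharp(b)$ modulo $p^2$ gives $\eta(\overline{ab})=\bar a^p\eta(\bar b)+\bar b^p\eta(\bar a)$. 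These are exactly the conditions making $\eta$ a $k$-derivation $A\to F_\ast\O_X$, where $\O_X$ acts through $F$, so $\eta\in\Hom(\Omega_X,F_\ast\O_X)$ and $\tilde F_1-\tilde F_2=p\eta$.

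For part (iii): since $\varphi$ is a $W_2(k)$-automorphism of $\tilde{X}$, the composite $\varphi\tilde F\varphi^{-1}$ is again a $W_2(k)$-morphism lifting $F$, its reduction being $\mathrm{id}\circ F\circ\mathrm{id}=F$, and it is still compatible with $\sigma$; hence it is a lift of Frobenius, and by (ii) it equals $\tilde F$ plus $p\eta$ for some $\eta\in\Hom(\Omega_X,F_\ast\O_X)$. To identify $\eta$, note $(\varphi^{-1})^\sharp=\mathrm{id}-p\psi$ because $p\psi\circ p\psi$ vanishes, and expand $(\varphi^{-1})^\sharp\circ\tilde F^\sharp\circ\varphi^\sharp$ on a local section. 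Two facts drive the computation: $\tilde F^\sharp$ maps $p\tilde{A}$ into $p\tilde{A}$ with $\tilde F^\sharp(\iota(b))=\iota(b^p)$, because $\tilde F^\sharp$ lifts Frobenius and fixes $p$; and, the one non-formal point, $\psi(\bar a^p)=p\,\bar a^{p-1}\psi(\bar a)=0$ in $A$, so every term in which $\psi$ is applied to a $p$-th power drops out. What survives is $\iota(\psi(\bar a)^p)$, that is $\eta=\psi^p$, the derivation $a\mapsto\psi(a)^p$, which lies in $\Hom(\Omega_X,F_\ast\O_X)$ and is additive because $p$-th powers are additive modulo $p$; hence $\tilde F-\varphi\tilde F\varphi^{-1}=p\psi^p$ with the sign conventions of the statement.

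I expect the main obstacle to be bookkeeping rather than any deep obstruction: one must simultaneously keep track of three distinct twists, namely reduction modulo $p$ versus multiplication by $p$ (the isomorphism $\iota$), the Frobenius $\sigma$ of $W_2(k)$ acting on scalars, and the $F$-twisted $\O_X$-module structure on $F_\ast\O_X$, and then verify that the maps extracted are genuine $k$-derivations so that the affine-local descriptions glue. The only step beyond formal manipulation is the vanishing $\psi(a^p)=0$ used in (iii), which is exactly what turns the correction term into a $p$-th power instead of a mixed expression.
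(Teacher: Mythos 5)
Your proof is correct in substance and takes the same approach the paper implicitly relies on: the paper's own ``proof'' of Proposition~\ref{local} simply declares part~(i) standard and refers the reader to Proposition~1 of the Appendix of \cite{MS} for (ii) and (iii), and what you have written out is exactly the local affine computation carried out there (identify $p\tilde A$ with $A$ by flatness, extract a set-map $\delta$ measuring the difference of two morphisms with the same reduction, verify the Leibniz and $k$- or $F$-semilinearity conditions, glue by canonicity). All the key mechanisms are present and correct: the well-definedness of $\psi$ and $\eta$ from the congruence modulo $p$, the Leibniz identity $\eta(\overline{ab})=\bar a^p\eta(\bar b)+\bar b^p\eta(\bar a)$ making $\eta$ a derivation into $F_\ast\O_X$, the identity $\tilde F^\sharp(\iota(b))=\iota(b^p)$ coming from compatibility with the Frobenius of $W_2(k)$, and the crucial vanishing $\psi(\bar a^p)=0$.

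One small point worth tightening: when you expand $(\varphi^{-1})^\sharp\circ\tilde F^\sharp\circ\varphi^\sharp$ you arrive, as you say, at $(\varphi\tilde F\varphi^{-1})^\sharp(a)=\tilde F^\sharp(a)+\iota(\psi(\bar a)^p)$, which reads as $\tilde F-\varphi\tilde F\varphi^{-1}=-p\psi^p$ if one interprets the difference scheme-theoretically; the paper's sign agrees with interpreting the composite at the level of ring endomorphisms, where $(\varphi\tilde F\varphi^{-1})$ denotes $\varphi^\sharp\circ\tilde F^\sharp\circ(\varphi^{-1})^\sharp$. Since only the class $p\psi^p$ modulo sign enters the cohomological argument in Proposition~\ref{def theory}, this is a harmless convention issue, but it would be cleaner to state which convention you use rather than leaving it to ``the sign conventions of the statement.''
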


\begin{proof}
The first statement is standard. 
For the others, see Proposition $1$ of Appendix of \cite{MS} and its proof.
\end{proof}

For any open subscheme $U \subset X$, we denote by $\mathrm{Rel}(X, F)(U)$ the set of isomorphism classes of liftings of the pair $(U, F|_{U})$ to $W_2(k)$.
Then they form a Zariski sheaf $\mathrm{Rel}(X, F)$ on $X$, which is a torsor under $\mathcal{H}om(\Omega_X, B_1\Omega_X)$ (see $ibid.$).

Similarly for any $U \subset X$, we denote by $\mathrm{sc}(C)(U)$ the set of sections of the Cartier operator $C \colon Z_1\Omega_U \to \Omega_U$, i.e., $\O_X$-linear morphisms $\phi \colon \Omega_U \to Z_1\Omega_U$ such that the composition $C \circ \phi$ is the identity of $\Omega_U$. 
Then the sheaf $\mathrm{sc}(C)$ is also a torsor under $\mathcal{H}om(\Omega_X, B_1\Omega_X)$.

\begin{Prop}\label{def theory}
\
\begin{description}
\item{(i)}
We have an isomorphism of torsors 
\begin{align*}
\mathrm{Rel}(X, F) \simeq \mathrm{sc}(C).
\end{align*}

\item{(ii)}
The class of the extension 
\begin{align*}
0 \to B_1\Omega_X \to Z_1\Omega_X \xrightarrow{C} \Omega_X \to 0
\end{align*}
is equal to $\obs_{X, F}$ in $\Ext^1(\Omega_X, B_1\Omega_X)$.

\item{(iii)}
Let $\mathcal{E}_1$ be the extension
\[
0 \to \O_X \xrightarrow{F} F_{\ast}\O_X \xrightarrow{d} B_1\Omega_X \to 0
\]
of $B_1\Omega_X$ by $\O_X$ and $\delta \colon \Ext^1(\Omega_X, B_1\Omega_X) \to \Ext^2(\Omega_X, \O_X)$ the connecting homomorphism induced by $\mathcal{E}_1$.
Then we have $\delta(\obs_{X, F}) = \obs_X$.

\end{description}
\end{Prop}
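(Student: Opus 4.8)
The plan is to prove (i) directly, to deduce (ii) formally from it, and to prove (iii) by a \v{C}ech computation built on Proposition~\ref{local}. Throughout I would use that $X$ is smooth, so $\Omega_X$ is locally free; in particular $\mathcal{E}xt^i(\Omega_X,-)=0$ for $i>0$, and the local-to-global spectral sequence yields canonical isomorphisms $\Ext^i(\Omega_X,\mathcal{F})\simeq H^i(X,\mathcal{H}om(\Omega_X,\mathcal{F}))$ for every $\O_X$-module $\mathcal{F}$.

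\emph{Part (i).} Since a morphism of torsors under a common group sheaf is automatically an isomorphism, it suffices to produce a natural equivariant map $\mathrm{Rel}(X,F)\to\mathrm{sc}(C)$. Given a local lifting $(\tilde U,\tilde F)$ of $(U,F|_U)$, I would set $\phi_{\tilde F}(\omega)=p^{-1}\tilde F^{\ast}(\tilde\omega)\bmod p$, where $\tilde\omega\in\Omega_{\tilde U}$ is any lift of a local section $\omega$ of $\Omega_U$. The key point is that $\tilde F^{\ast}$ reduces to $0$ on one-forms modulo $p$ (because $F^{\ast}(f\,dg)=f^p\,d(g^p)=0$), so $\tilde F^{\ast}(\tilde\omega)\in p\,\Omega_{\tilde U}$ and the formula is independent of the lift $\tilde\omega$; that $d$ commutes with $\tilde F^{\ast}$ gives $\O_U$-linearity with image in $Z_1\Omega_U$; and the short local computation
\[
C\bigl(p^{-1}\tilde F^{\ast}(\tilde f\,d\tilde g)\bmod p\bigr)=C\bigl(\bar f^{\,p}\bar g^{\,p-1}\,d\bar g\bigr)=\bar f\,d\bar g
\]
(using $\tilde F^{\#}(\tilde f)\equiv\tilde f^{\,p}$, $C(h^{p-1}dh)=dh$, $C\circ d=0$) gives $C\circ\phi_{\tilde F}=\mathrm{id}$. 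By Proposition~\ref{local}(ii), two Frobenius liftings $\tilde F_1,\tilde F_2$ on a fixed $\tilde U$ differ by $p\eta$ with $\eta\in\mathcal{H}om(\Omega_X,F_{\ast}\O_X)$, and then $\phi_{\tilde F_1}-\phi_{\tilde F_2}=d\circ\eta$ is exactly the action of the image of $\eta$ in $\mathcal{H}om(\Omega_X,B_1\Omega_X)$; Proposition~\ref{local}(i),(iii) shows $\phi_{\tilde F}$ depends only on the isomorphism class of $(\tilde U,\tilde F)$. Hence $(\tilde U,\tilde F)\mapsto\phi_{\tilde F}$ is a morphism, and so an isomorphism, of $\mathcal{H}om(\Omega_X,B_1\Omega_X)$-torsors.

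\emph{Part (ii).} Via the isomorphism $\Ext^1(\Omega_X,B_1\Omega_X)\simeq H^1(X,\mathcal{H}om(\Omega_X,B_1\Omega_X))$ this reduces to comparing torsor classes. The torsor $\mathrm{Rel}(X,F)$ has class $\obs_{X,F}$ by the definition of the obstruction, and unwinding the Yoneda description of $\Ext^1$ identifies the class of the torsor $\mathrm{sc}(C)$ of local splittings of the Cartier sequence with the extension class of that sequence (again using that $\Omega_X$ is locally free, so the extension is everywhere locally split). By (i) these classes coincide, which is (ii).

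\emph{Part (iii).} Applying $\mathcal{H}om(\Omega_X,-)$ to $\mathcal{E}_1$ preserves exactness, and the resulting connecting map $H^1(X,\mathcal{H}om(\Omega_X,B_1\Omega_X))\to H^2(X,\mathcal{H}om(\Omega_X,\O_X))$ is, after the above identifications, exactly $\delta$. I would pick an affine open cover $\{U_i\}$ with affine intersections over which $(X,F)$ lifts, choose liftings $(\tilde U_i,\tilde F_i)$ and isomorphisms $\theta_{ij}\colon\tilde U_i|_{U_{ij}}\xrightarrow{\sim}\tilde U_j|_{U_{ij}}$, and use Proposition~\ref{local}(ii) to write $\theta_{ij}\tilde F_i\theta_{ij}^{-1}-\tilde F_j=p\,\eta_{ij}$ with $\eta_{ij}\in\mathcal{H}om(\Omega_X,F_{\ast}\O_X)(U_{ij})$. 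By (i) and the torsor structure on $\mathrm{Rel}(X,F)$, the \v{C}ech $1$-cocycle $\{d\circ\eta_{ij}\}$ represents $\obs_{X,F}$, so $\delta(\obs_{X,F})$ is the class of the coboundary of $\{\eta_{ij}\}$. On the other hand $\obs_X$ is represented by $\{\psi_{ijk}\}$, where $\theta_{jk}\theta_{ij}=(\mathrm{id}+p\,\psi_{ijk})\theta_{ik}$ and $\psi_{ijk}\in\mathcal{T}_X(U_{ijk})$. The identity to verify is that these two $2$-cocycles agree up to a coboundary: after transporting all the $\eta_{ij}$ and $\tilde F_i$ to a common lifting by means of the $\theta$'s and applying Proposition~\ref{local}(iii) to the infinitesimal automorphism $\mathrm{id}+p\,\psi_{ijk}$, the coboundary of $\{\eta_{ij}\}$ comes out (up to sign) as the $2$-cochain $\omega\mapsto\psi_{ijk}(\omega)^{p}$ with values in $F_{\ast}\O_X$, which lies in $\O_X\subset F_{\ast}\O_X$ and equals $\pm\psi_{ijk}$ there, hence represents $\obs_X$ in $\Ext^2(\Omega_X,\O_X)$.

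\emph{Main obstacle.} Parts (i) and (ii) are essentially formal once the construction $\tilde F\mapsto\phi_{\tilde F}$ and the verification $C\circ\phi_{\tilde F}=\mathrm{id}$ are in place. The genuine work is the cocycle bookkeeping in part (iii): keeping straight the roles of the local isomorphisms $\theta_{ij}$ when passing between the various liftings, controlling the passage from $\mathcal{H}om(\Omega_X,F_{\ast}\O_X)$ down to $\mathcal{H}om(\Omega_X,\O_X)$, and absorbing the Frobenius twist produced by Proposition~\ref{local}(iii), all while fixing one consistent sign convention. I expect that final matching to be where essentially all the difficulty lies.
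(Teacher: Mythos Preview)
Your approach is correct and matches the paper's proof essentially line for line. Your construction in (i), $\phi_{\tilde F}(\omega)=p^{-1}\tilde F^{\ast}(\tilde\omega)\bmod p$, is exactly the paper's map $d\bar x\mapsto \bar x^{\,p-1}d\bar x+d\psi(\bar x)$ written invariantly; (ii) is deduced from (i) in both cases; and your \v Cech computation for (iii), with $\theta_{ij}\tilde F_i\theta_{ij}^{-1}-\tilde F_j=p\,\eta_{ij}$ and the coboundary identified via Proposition~\ref{local}(iii) as $\omega\mapsto\psi_{ijk}(\omega)^p$, is precisely the paper's argument (the paper writes the coboundary as $\eta_{ijk}^{\,p}$ and then identifies $\eta_{ijk}$ with $\varphi_{ijk}$).
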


\begin{Rem}
The first assertion is a rigidified version of \cite[Theorem 3.5]{DI}.
The others are stated in \cite{Sri} without a proof.
\end{Rem}

\begin{proof}
(i) We have a morphism $\mathrm{Rel}(X, F) \to \mathrm{sc}(C)$ defined by the following.
Let $(\tilde{U}, \tilde{F})$ be a lifting of $(U, F|_{U})$.
For a local section $x \in \O_{\tilde{U}}$, we have $\tilde{F}(x) = x^p + p\psi(\overline{x})$ for some function $\psi \colon \O_{U} \to \O_{U}$.
Here $\overline{x} \in \O_{U}$ is the image of $x$ under the reduction modulo $p$.
An assignment $\overline{x} \mapsto \overline{x}^{p-1}d\overline{x} + d\psi(\overline{x})$ defines a section $\phi$ of the Cartier operator over $U$.
This is a morphism of torsors under $\mathcal{H}om(\Omega_X, B_1\Omega_X)$, so these two are isomorphic.

(ii) This follows from (i).

(iii) We first recall the construction of the obstruction classes $\obs_X$ and $\obs_{X,F}$. 
Take an affine open covering $X = \bigcup U_i$ such that there exist liftings $(\tilde{U}_i, \tilde{F}_i)$ of the pair $(U_i, F|_{U_i})$ for each $i$.
Let $\tilde{U}_{ij} \subset \tilde{U}_i$ be the open subscheme corresponding to $U_{ij} = U_i \cap U_j\subset U_i$.
Then we have isomorphisms $\varphi_{ij} \colon \tilde{U}_{ij} \xrightarrow{\sim} \tilde{U}_{ji}$ such that $\varphi_{ij}|_{U_{ij}}$ is the identity.
In the following, we omit the symbol of the restriction to a smaller open subscheme.
The composition $\varphi_{ik}^{-1} \varphi_{jk} \varphi_{ij}$ is an infinitesimal automorphism of $\tilde{U}_{ijk}$ and defines a derivation $\varphi_{ijk} \in \mathrm{Hom}(\Omega_{U_{ijk}}, \O_{U_{ijk}})$.
Then the class $\{\varphi_{ijk}\}$ defines $\obs_X \in H^2(X, \mathcal{H}om(\Omega_X, \O_X)) = \Ext^2(\Omega_X, \O_X) $.

Now $\varphi_{ji} \tilde{F}_j \varphi _{ji}^{-1}$ and $\tilde{F}_i$ are two liftings of the Frobenius on $U_{ij}$.
By Proposition \ref{local}(ii), they differ by some $\eta _{ij} \in \mathrm{Hom}(\Omega_{U_{ij}}, F_{\ast}\O_{U_{ij}})$ and its image $\overline{\eta}_{ij} \in \mathrm{Hom}(\Omega_{U_{ij}}, B_1\Omega_{U_{ij}})$ of $\eta_{ij}$ is independent of the choice $\varphi_{ij}$. 
Then $\{{\overline{\eta}_{ij}}\}$ defines the obstruction class $\obs_{X, F} \in \Ext^1(\Omega_X, B_1\Omega_X) = H^1(X, \mathcal{H}om(\Omega_X, B_1\Omega_X))$.

We have $\eta_{ij} + \eta_{jk}-\eta_{ik} = \eta _{ijk}^p$ for some $\eta_{ijk} \in \mathrm{Hom}(\Omega_{U_{ijk}}, \O_{U_{ijk}})$ and the class $\{ \eta_{ijk} \}$ represents $\delta(\obs_{X, F})$.
By definition, we have $\varphi_{ji} \tilde{F}_j \varphi _{ji}^{-1} - \tilde{F}_i = p\eta_{ij}$.
From these equations, we obtain
\begin{align*}
\varphi_{ik}^{-1} \varphi_{jk} \varphi_{ij} \tilde{F}_i \varphi _{ij}^{-1} \varphi _{jk}^{-1} \varphi _{ik} - \tilde{F}_i = p(\eta_{ij} + \eta_{jk} - \eta_{ik}).
\end{align*}
for any $i, j, k$.
By Proposition \ref{local}, we see that $\varphi_{ijk}^p = \eta_{ijk}$. 
Therefore, $\delta(\obs_{X, F}) = \obs_X$.
\end{proof}

\begin{Cor}
Let 
\[
\Ext^1(\Omega_X, B_1\Omega_X) \otimes \Ext^1(B_1\Omega_X, \O_X) \xrightarrow{(-, -)} \Ext^2(\Omega_X, \O_X)
\]
be the Yoneda paring.
Then we have
\[
(\obs_{(X, F)}, \mathcal{E}_1) = \obs_X.
\]
\end{Cor}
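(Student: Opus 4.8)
The plan is to deduce the Corollary directly from Proposition \ref{def theory}(iii) together with the standard homological identification of a connecting homomorphism with a Yoneda product. Recall that for any short exact sequence of $\O_X$-modules, in particular for $\mathcal{E}_1 \colon 0 \to \O_X \xrightarrow{F} F_{\ast}\O_X \xrightarrow{d} B_1\Omega_X \to 0$ with class $[\mathcal{E}_1] \in \Ext^1(B_1\Omega_X, \O_X)$, the long exact sequence obtained by applying $\Ext^{\bullet}(\Omega_X, -)$ has connecting homomorphisms $\delta \colon \Ext^i(\Omega_X, B_1\Omega_X) \to \Ext^{i+1}(\Omega_X, \O_X)$ which coincide (up to the usual sign arising from the choice of conventions) with Yoneda product against $[\mathcal{E}_1]$, i.e.\ $\delta(e) = (e, \mathcal{E}_1)$. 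Concretely, if $e \in \Ext^1(\Omega_X, B_1\Omega_X)$ is represented by an extension $0 \to B_1\Omega_X \to E \to \Omega_X \to 0$, then both $\delta(e)$ and the Yoneda splice $(e, \mathcal{E}_1)$ are represented by the $2$-extension $0 \to \O_X \to F_{\ast}\O_X \to E \to \Omega_X \to 0$.

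Granting this, I would simply take $e = \obs_{(X,F)}$, which lies in $\Ext^1(\Omega_X, B_1\Omega_X)$. By the identification just recalled, $(\obs_{(X,F)}, \mathcal{E}_1) = \delta(\obs_{(X,F)})$ in $\Ext^2(\Omega_X, \O_X)$, where $\delta$ is exactly the connecting homomorphism appearing in Proposition \ref{def theory}(iii). That proposition gives $\delta(\obs_{(X,F)}) = \obs_X$. Chaining the two equalities yields $(\obs_{(X,F)}, \mathcal{E}_1) = \obs_X$, which is the assertion.

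The only delicate point is matching sign conventions: the proof of Proposition \ref{def theory}(iii) was carried out via an explicit \v{C}ech-cocycle computation, and one should check that the cocycle it produces, when read as a splice of extensions representing $(\obs_{(X,F)}, \mathcal{E}_1)$, agrees with the Yoneda product on the nose rather than up to a sign. I expect this to be the main (and essentially the only) obstacle, and it is routine: one fixes conventions so that the connecting map in $\Ext^{\bullet}(\Omega_X,-)$ is literally Yoneda composition with $[\mathcal{E}_1]$, under which the computation of Proposition \ref{def theory}(iii) already exhibits the desired identity without a sign.
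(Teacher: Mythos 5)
Your proof is correct and takes the same approach as the paper, which simply observes that the Corollary is a restatement of Proposition \ref{def theory}(iii) via the standard identification of the connecting homomorphism induced by $\mathcal{E}_1$ with Yoneda composition against its extension class. Your explicit unwinding of that identification (and the remark about sign conventions) is just a more careful spelling-out of the same one-line argument.
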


\begin{proof}
This is a restatement of Proposition \ref{def theory}(iii).
\end{proof}

\begin{Rem}
The lifting theory in this section is valid for the lifting problem of $(\tilde{X}, \tilde{F})$ over $W_n(k)$ to $W_{n+1}(k)$ for any $n$. 
\end{Rem}

\section{Artin-Mazur height of Calabi-Yau varieties after van der Geer and Katsura}

Let $X$ be a scheme over $k$ and $n$ the dimension of $X$.
Consider the functor $\Phi_X$ from the category of Artin local $k$-algebras with residue field $k$ to the category of abelian groups defined by
\[
\Phi_X \colon A \mapsto \Ker (\Het^n(X \otimes _kA, \G_m) \to \Het^n(X, \G_m)).
\]

By the results of Artin and Mazur \cite{AM}, when $X$ is a Calabi-Yau variety, the functor $\Phi_X$ is pro-represented by a one dimensional formal group.
The \textit{Artin-Mazur height} $\height(X)$ of $X$ is defined to be the height of the associated formal group $\Phi_X$.
The Dieudonn\'e module of $\Phi_X$ is canonically isomorphic to the Serre's Witt vector cohomology $H^n(X, W\O_X) := \varprojlim_m H^n(X, W_m\O_X)$.
In particular, we see that
\begin{align*}
\height(X) =
\begin{cases}
\dim _{K} H^n(X, W\O_X) \otimes K & \text{if $H^n(X, W\O_X) \otimes K \neq 0$}, \\
\infty & \text{if $H^n(X, W\O_X) \otimes K = 0$}.
\end{cases}
\end{align*}
Here $K$ is the field of fractions of $W(k)$.
In the case $\height (X)< \infty$, the $W(k)$-module $H^n(X, W\O_X)$ is free and finitely generated.

This invariant has the following characterization due to van der Geer and Katsura \cite{GK}. 

\begin{Thm}
Let $X$ be a Calabi-Yau variety over $k$ of dimension $n$.
Then $\height(X)$ is equal to the minimum number $m$ such that $F$ acts non-trivially on $H^n(X, W_m\O_X)$.
\end{Thm}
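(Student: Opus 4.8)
The plan is to carry the whole argument inside the $W(k)$-module $M := H^n(X, W\O_X) = \varprojlim_m H^n(X, W_m\O_X)$ with its operators $F, V, R$, using that $FV = VF = p$ and that $V(a\omega)=\sigma^{-1}(a)\,V\omega$ for $\sigma$ the Frobenius of $W(k)$. Write $M_m := H^n(X, W_m\O_X)$. The first and main step is to identify $M_m \cong M/V^mM$ for every $m\ge1$, compatibly with $F$. Starting from the sheaf sequences $0 \to W_m\O_X \xrightarrow{V} W_{m+1}\O_X \xrightarrow{R^m} \O_X \to 0$ and $0 \to \O_X \xrightarrow{V^m} W_{m+1}\O_X \xrightarrow{R} W_m\O_X \to 0$, together with the Calabi--Yau vanishing $H^i(X,\O_X)=0$ for $0<i<n$ and Grothendieck vanishing $H^j(X,\O_X)=0$ for $j>n$ (which propagates to $W_m\O_X$, since it carries a finite filtration whose subquotients are abelian sheaves isomorphic to $\O_X$), I would show: the transition maps $R\colon M_{m+1}\to M_m$ are surjective, whence $H^{n+1}(X,W\O_X)=0$; the operator $V$ is injective on each $M_m$ and on $M$ (the relevant boundary map being killed by $H^{n-1}(X,\O_X)=0$ when $n\ge2$, and by surjectivity of $H^0(X,W_{m+1}\O_X)\to H^0(X,\O_X)$ when $n=1$); and then, from $0 \to W\O_X \xrightarrow{V^m} W\O_X \xrightarrow{R^m} W_m\O_X \to 0$, that $M_m\cong M/V^mM$. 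Passing to the limit gives $M=\varprojlim_m M/V^mM$, hence $\bigcap_m V^mM=0$; moreover $M/VM\cong M_1=H^n(X,\O_X)$ is one-dimensional over $k$ by Serre duality and $\omega_X\cong\O_X$, so (as $V$ is injective) every graded piece $V^jM/V^{j+1}M$ is one-dimensional and $\dim_k M_m=m$.

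With these preliminaries the problem becomes formal. Fix $e\in M$ with $e\notin VM$; then the classes of $e,Ve,\dots,V^{m-1}e$ form a $k$-basis of $M_m$, and $\{V^je\}_{j\ge0}$ topologically generate $M$ over $W(k)$. Since $FV=p$, one has $FV^je=pV^{j-1}e$ for $j\ge1$, so $F$ on $M$ and on every $M_m$ is determined by $Fe$; a short closure argument shows $F$ acts trivially on $M_m$ if and only if $Fe\in V^mM$ and $pM\subseteq V^mM$. Set $r:=1+\sup\{\,j\ge0 : Fe\in V^jM\,\}\in\{1,2,\dots\}\cup\{\infty\}$, so $r=\infty$ exactly when $Fe=0$ (as $\bigcap_j V^jM=0$). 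Beginning with $pM=V(FM)\subseteq VM$ and iterating the inclusions $FM\subseteq W(k)\cdot Fe+pM$ and $pM=V(FM)$, one gets $pM\subseteq V^rM$ whenever $Fe\in V^{r-1}M$; since also $Fe\notin V^rM$, the set of $m$ for which $F$ acts non-trivially on $M_m$ is exactly $\{m\ge r\}$. Hence the minimum in the statement equals $r$ (and $r$ does not depend on $e$, any other choice differing by a unit multiple plus an element of $VM$ while $pM\subseteq V^rM$).

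It remains to prove $r=\height(X)=\dim_K M\otimes_{W(k)}K$, where this dimension is read as $\infty$ when the tensor product vanishes. If $r=\infty$, then $Fe=0$, so $pe=VFe=0$ and, $p$ being $W(k)$-linear, $pM=0$; thus $M\otimes_{W(k)}K=0$ and $\height(X)=\infty=r$. If $r<\infty$, let $u$ be the unique element with $V^{r-1}u=Fe$; then $u\notin VM$ (because $Fe\notin V^rM$), so $u$ generates $M/VM$, and $pe=VFe=V^ru$. A successive-approximation argument in the $V$-adically complete module $M$ then yields $pM=V^rM$ (the leading $V$-adic term of $pV^je$ being a unit times $V^{r+j}e$), hence $p^jM=V^{jr}M$ and $M/pM\cong M/V^rM$ has $k$-dimension $r$; in particular $\height(X)$ is finite (to exclude $\height(X)=\infty$ one uses that $V$, being injective, is bijective on the finite-length torsion submodule of $M$), so $M$ is free of rank $h=\height(X)$ over $W(k)$ by the discussion recalled above, and comparing with $\dim_k M/pM$ gives $h=r$. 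Equivalently, once $M\cong W(k)^{\oplus h}$ with $V$ injective and $M/VM$ one-dimensional, Smith normal form combined with $FV=VF=p$ produces a basis $e_0,\dots,e_{h-1}$ with $Ve_i=e_{i+1}$ $(i<h-1)$, $Ve_{h-1}=pe_0$, $Fe_0=e_{h-1}=V^{h-1}e_0$, $Fe_i=pe_{i-1}$ $(i\ge1)$; taking $e=e_0$ one reads off $Fe\in V^{h-1}M\setminus V^hM$, i.e. $r=h$.

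I expect the main obstacle to be the first step, the identification $M_m\cong M/V^mM$: that is exactly where the Calabi--Yau hypotheses are consumed (to annihilate all the boundary maps), and it also requires some care with the inverse limit defining $W\O_X$ and with the small-dimensional cases $n\le 2$; after that, the remaining steps are routine manipulations of the Dieudonn\'e module, whose only non-elementary input is the normal form of a finite-rank $W(k)$-module on which $V$ acts injectively with one-dimensional cokernel.
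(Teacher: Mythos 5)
The paper does not prove this theorem: it is imported wholesale from van der Geer and Katsura \cite{GK}, where it is the central structure result for the formal group $\Phi_X$ (the paper only proves the consequence, Corollary~\ref{dim formula}, ``for the sake of completeness''). So there is no internal argument in the paper to compare yours against; I can only assess your proposal on its own merits and against the cited source. Your route --- identify $M_m := H^n(X,W_m\O_X)$ with $M/V^mM$ using the Calabi--Yau vanishing of $H^i(X,\O_X)$ for $0<i<n$ together with Grothendieck vanishing, then exploit injectivity of $V$, one-dimensionality of $M/VM$, and $FV=VF=p$ to locate the first $m$ with $FM\not\subseteq V^mM$ --- is sound and is in fact essentially the van der Geer--Katsura argument, phrased a bit more formally in terms of $V$-adic filtrations on the Dieudonn\'e module rather than their explicit coordinate computations.

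A few places where your sketch needs tightening, though none are fatal: (1) ``$\dim_k M_m = m$'' should be ``$\length_{W(k)} M_m = m$''; as the paper itself notes, $M_m$ is in general a $W(k)$-module of finite length and not a $k$-vector space, so only the graded pieces $V^jM/V^{j+1}M$ are genuine one-dimensional $k$-spaces. (2) The identification $M_m\cong M/V^mM$ requires that the paper's $H^n(X,W\O_X)=\varprojlim_m M_m$ agree with sheaf cohomology of $\varprojlim_m W_m\O_X$; this is fine because the CY vanishing gives $H^{n-1}(X,W_m\O_X)=0$ for $n\ge 2$ (and surjective $H^0$ transition maps for $n=1$), so the relevant $\varprojlim^1$ terms die, but this Mittag--Leffler point should be stated. (3) Your parenthetical reason for excluding $\height(X)=\infty$ when $r<\infty$ (``$V$ bijective on the finite-length torsion submodule'') is not quite the right mechanism, since a priori the torsion need not have finite length. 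The clean argument is that $pe$ has leading $V$-adic term a unit times $V^re$, so multiplication by $p$ is injective on the $V$-adic associated graded and hence on $M$; therefore $p^je\neq 0$ for all $j$, $M\otimes K\neq 0$, and the freeness recalled in the paper kicks in. With those repairs the proof is correct and complete.
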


In the following, the symbol “$\dim$” denotes the dimension of a $k$-vector space.
In general, $H^i(X, W_m\O_X)$ is not a $k$-vector space, but a $W(k)$-module of finite length (when $X$ is proper over $k$).
The symbol “$\length$” denotes the length as a $W(k)$-module.
Since the operators $F$ and $V$ on $H^i(X, W_m\O_X)$ satisfies relations $FV=p=VF$, the $W(k)$-module $H^i(X, W_m\O_X)/FH^i(X, W_m\O_X)$ is a $k$-vector space and one can speak of its dimension as a $k$-vector space.

There is another formula which will be useful for our purpose.

\begin{Cor}[\cite{GK} Proposition 3.1.]\label{dim formula}
Let $X$ be a Calabi-Yau variety over $k$ of dimension $n$.
Then we have
\begin{align*}
\dim H^i(X, B_m\Omega _X) = 
\begin{cases}
0 & \text{if $i \neq n-1, n$ }, \\
\min\{m, \height(X) -1\} & \text{if $i = n-1, n$}.
\end{cases}
\end{align*}
\end{Cor}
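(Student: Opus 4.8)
The plan is to extract the cohomology of $B_m\Omega_X$ from Serre's exact sequence and then to analyse the Frobenius action on the Witt vector cohomology $H^n(X,W_m\O_X)$ by means of the van der Geer--Katsura theorem recalled above. Since the absolute Frobenius $F\colon X\to X$ is a homeomorphism of topological spaces, one has $H^i(X,F_\ast\mathcal{G})\cong H^i(X,\mathcal{G})$ for every abelian sheaf $\mathcal{G}$. Applying this to Serre's exact sequence
\[
0\to W_m\O_X\xrightarrow{F}F_\ast W_m\O_X\xrightarrow{D_m}B_m\Omega_X\to 0
\]
and taking cohomology yields a long exact sequence whose self-maps $H^i(X,W_m\O_X)\to H^i(X,W_m\O_X)$ are exactly the Frobenius operators on Witt vector cohomology; so everything is reduced to understanding $H^\bullet(X,W_m\O_X)$ together with the action of $F$.

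First I would determine $H^i(X,W_m\O_X)$. From the short exact sequence $0\to\O_X\xrightarrow{V^{m-1}}W_m\O_X\xrightarrow{R}W_{m-1}\O_X\to 0$, an induction on $m$ using the Calabi--Yau vanishing $H^i(X,\O_X)=0$ for $0<i<n$ (and, when $n=1$, the surjectivity of $R$ on $H^0$) gives $H^0(X,W_m\O_X)=W_m(k)$, $H^i(X,W_m\O_X)=0$ for $0<i<n$ and for $i>n$, and short exact sequences $0\to H^n(X,\O_X)\to H^n(X,W_m\O_X)\to H^n(X,W_{m-1}\O_X)\to 0$. Since $\omega_X\simeq\O_X$, Serre duality gives $H^n(X,\O_X)\cong k$, so $M_m:=H^n(X,W_m\O_X)$ has $W(k)$-length $m$. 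As $F$ is bijective on $H^0(X,W_m\O_X)=W_m(k)$ ($k$ being perfect), the long exact sequence of the first paragraph collapses to
\[
H^i(X,B_m\Omega_X)\cong
\begin{cases}
0 & \text{if }i\neq n-1,n,\\
\Ker\bigl(F\colon M_m\to M_m\bigr) & \text{if }i=n-1,\\
\mathrm{coker}\bigl(F\colon M_m\to M_m\bigr) & \text{if }i=n.
\end{cases}
\]

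It remains to compute these kernels and cokernels in terms of $h:=\height(X)$. If $h=\infty$, or more generally whenever $m<h$, the van der Geer--Katsura theorem forces $F=0$ on $M_m$; combined with $VF=p$ this gives $pM_m=0$, so $M_m$ is a $k$-vector space with $\dim_k M_m=\length_{W(k)}M_m=m$, and $\Ker(F|_{M_m})=\mathrm{coker}(F|_{M_m})=M_m$ has dimension $m=\min\{m,h-1\}$. If $h<\infty$ and $m\geq h$, I would use the structure of $M:=H^n(X,W\O_X)$, which is then free of rank $h$ over $W(k)$; from $0\to W\O_X\xrightarrow{V^m}W\O_X\to W_m\O_X\to 0$ one gets $M_m=M/V^mM$, the operators $F$ and $V$ on $M$ are injective (as $FV=VF=p$ acts injectively on the free module $M$), $M/VM=M_1=H^n(X,\O_X)\cong k$, and $\length_{W(k)}(M/V^mM)=\length_{W(k)}M_m=m$ for every $m$. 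The theorem applied at level $h-1$ gives $FM\subseteq V^{h-1}M$, whence $pM=VFM\subseteq V^hM$; comparing this with $\length_{W(k)}(M/pM)=h=\length_{W(k)}(M/V^hM)$ forces $pM=V^hM$, and then $FM=V^{h-1}M$ by injectivity of $V$. A direct computation (using that $F$ commutes with powers of $V$ and is injective) then identifies $\mathrm{coker}(F|_{M_m})\cong M/V^{h-1}M$ and $\Ker(F|_{M_m})\cong V^{m-h+1}M/V^mM$, both of $W(k)$-length $h-1$ and annihilated by $p$ (since $pM=V^hM$), hence of $k$-dimension $h-1=\min\{m,h-1\}$. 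Combined with the case $m<h$, this gives the asserted formula.

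The step I expect to be the main obstacle is the last one: converting the van der Geer--Katsura characterization of the height into the clean submodule identities $FM=V^{h-1}M$ and $pM=V^hM$, and then checking that the kernel and cokernel of $F$ on the truncations $M/V^mM$ are honest $k$-vector spaces of the asserted dimension. The reduction through Serre's exact sequence and the computation of $H^\bullet(X,W_m\O_X)$ for Calabi--Yau varieties are comparatively routine.
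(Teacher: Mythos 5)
Your proposal is correct, and it is close in spirit to the paper's proof (both rest on Serre's exact sequence
\[
0 \to W_m\O_X \xrightarrow{F} F_{\ast}W_m\O_X \xrightarrow{D_m} B_m\Omega_X \to 0
\]
and on the van der Geer--Katsura characterization of the height), but the details differ in two noteworthy places. First, where you compute $H^{n-1}(X,B_m\Omega_X)$ and $H^n(X,B_m\Omega_X)$ separately as the kernel and cokernel of $F$ on $M_m:=H^n(X,W_m\O_X)$, the paper instead observes that $\chi(B_1\Omega_X)=\chi(F_\ast\O_X)-\chi(\O_X)=0$ and hence $\chi(B_m\Omega_X)=0$ by d\'evissage, which forces $\dim H^{n-1}=\dim H^n$ once the vanishing for $i<n-1$ is known; this reduces everything to the top cohomology alone. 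Second, for $m\ge \height(X)<\infty$ the paper does not produce the explicit identities $FM=V^{h-1}M$ and $pM=V^hM$; instead it shows the sequence $\{\dim H^n(X,B_m\Omega_X)\}_m$ is non-decreasing (via the Cartier surjections $B_m\Omega_X\to B_{m-1}\Omega_X$), pins down the value $m$ for $m<h$ exactly as you do, and then caps the sequence from above by $\dim(M/FM)=\rank_W M-\dim(M/VM)=h-1$. Your route is more explicit (you actually exhibit $\Ker(F|_{M_m})$ and $\mathrm{coker}(F|_{M_m})$ as quotients of $V$-filtration steps), at the price of carrying out the submodule calculus on the free module $M$; the paper's route is shorter because the Euler characteristic identity and the monotonicity-plus-upper-bound argument do the bookkeeping for it. The small points you flag as potential obstacles (the identities $FM=V^{h-1}M$, $pM=V^hM$, and the $k$-vector-space structure of the kernel and cokernel) check out exactly as you argue, modulo the standard Mittag--Leffler reasoning needed to justify $M_m\cong M/V^mM$ from the sheaf sequence $0\to W\O_X\xrightarrow{V^m}W\O_X\to W_m\O_X\to 0$.
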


\begin{proof}
For the sake of completeness, we give a proof.
By the vanishing condition of $H^i(X, \O_X)$ for $0 < i<n$, it is easy to see that $H^i(X, B_m\Omega_X)=0$ for $i < n-1$.

Furthermore, by the exact sequence $\mathcal{E}_1$ (for the definition of $\mathcal{E}_1$, see Proposition \ref{def theory}(iii) or later Remark \ref{E_m}), we see that
\[
\chi (B_1\Omega_X) := \sum_i (-1)^i\dim H^i(X, B_1\Omega_X) = \chi(F_{\ast}\O_X) -\chi (\O_X)=0.
\]
Note that, in the last equality, we used the fact that $F$ is a finite morphism.
Since $B_m\Omega_X$ is a successive extension of $B_1\Omega_X$, we see that $\chi(B_m\Omega_X)=0$ for any $m \geq 1$.
Therefore, it is  enough to show that $\dim H^n(X, B_m\Omega _X) = \min\{m, \height(X) -1\}$.

Since the Cartier operator induces a surjection $B_m\Omega_X \to B_{m-1}\Omega_X$, the sequence $\{\dim H^n(X, B_m\Omega _X)\}_{m \geq 1}$ is increasing. 
If $m < \height(X) $, by the theorem, we see that
\begin{align*}
\dim H^n(X, B_m\Omega _X) & = \dim \frac{H^n(X, W_m\O_X)}{FH^n(X, W_m\O_X)} \\
&= \length H^n(X, W_m\O_X) =m.
\end{align*}
Furthermore, when $\height (X)$ is finite,  we have
\begin{align*}
\dim H^n(X, B_m\Omega _X) & = \dim \frac{H^n(X, W_m\O_X)}{FH^n(X, W_m\O_X)} \\
& \leq \dim \frac{H^n(X, W\O_X)}{FH^n(X, W\O_X)} \\
& = \rank_W H^n(X, W\O_X) - \dim \frac{H^n(X, W\O_X)}{VH^n(X, W\O_X)} \\
& = \height (X) -1.
\end{align*}
In the second equality, we used the fact that $H^n(X, W\O_X)$ is a free $W(k)$-module and equalities $p=FV=VF$.
This completes the proof.
\end{proof}

\section{Frobenius-split height and quasi-Frobenius-split varieties}

Let $X$ be a scheme over $k$.
For each $m >0$, consider the following morphisms between $W_m\O_X$-modules:
\[
\xymatrix{
W_m\O_X \ar[d]_-{R^{m-1}} \ar[r]^-{F} & F_{\ast}W_m\O_X \\
\O_X.
}
\]

\begin{Def}\label{q f split}
We define the \textit{Frobenius-split height} $\sht(X)$ of $X$ by the minimum number $m>0$ such that there exists a $W_m\O_X$-linear homomorphism 
\begin{align*}
\phi \colon F_{\ast}W_m\O_X \to \O_X
\end{align*}
satisfying $\phi \circ F = R^{m-1}$.
If such an $m$ does not exist, then $\sht(X)$ is $\infty$. 
A scheme $X$ over $k$ is said to be \textit{quasi-Frobenius-split} if $\sht(X) < \infty$.
\end{Def}

\begin{Rem}
By the definition, $X$ is Frobenius-split if and only if $\sht(X) = 1$.
\end{Rem}

\begin{Rem}\label{E_m}
Assume that $X$ is smooth over $k$.
Consider the following exact sequence;
\begin{align*}
0 \to W_m\O_X \xrightarrow{F} F_{\ast}W_m\O_X \xrightarrow{D_m} B_m\Omega_X \to 0 . 
\end{align*}
If we pushout this sequence via $W_{m}\O_X \xrightarrow{R^{m-1}} \O_X$, we get a new exact sequence
\begin{align*}
0 \to \O_X \to \mathcal{E}_m \to B_m\Omega_X \to 0. 
\end{align*}
We denote by $\mathcal{E}_m$ this extension of $B_m\Omega_X$ by $\O_X$.
When $m=1$, the morphism $R^{m-1}$ is the identity of $\O_X$ and $\mathcal{E}_1$ is equal to the one previously defined in Proposition \ref{def theory}(iii).
The existence of $\phi$ as in Definition \ref{q f split} is equivalent to the splitting of the exact sequence $\mathcal{E}_m$ and the Frobenius-split height is characterized as
\begin{align*}
\sht(X) = \min\{m>0 ; \text{$\mathcal{E}_m$ splits}\}.
\end{align*}
Furthermore, since the diagram  
\[
\xymatrix{
F_{\ast}W_{m+1}\O_X  \ar[d]_{R}\ar[r]^-{D_{m+1}} & B_{m+1}\Omega_X \ar[d]^{C} \\
F_{\ast}W_m\O_X \ar[r]^-{D_m} & B_m\Omega_X
}
\]
commutes, the exact sequence $\mathcal{E}_m$ is obtained from the exact sequence
\begin{align*}
0 \to \O_X \to F_{\ast}\O_X \xrightarrow{d} B_1\Omega_X \to 0  
\end{align*}
by pulling back along $B_m\Omega_X \xrightarrow{C^{m-1}} B_1\Omega_X$, i.e., we have a morphism of extensions
\[
\xymatrix{
0 \ar[r] &\O_X \ar[r] \ar@{=}[d] &\mathcal{E}_m \ar[r] \ar[d] &B_m\Omega_X \ar[r] \ar[d]^{C^{m-1}} &0 &\mathcal{E}_m \\
0 \ar[r] &\O_X \ar[r] &F_{\ast}\O_X \ar[r] &B_1\Omega_X \ar[r] &0 &\mathcal{E}_1.
}
\]
\end{Rem}

The proof of our main theorem is divided into two parts. 

\begin{Thm}
Every smooth quasi-Frobenius-split scheme over $k$ admits a smooth lifting to $W_2(k)$. 
\end{Thm}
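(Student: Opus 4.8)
The plan is to prove that the obstruction class $\obs_X\in\Ext^2(\Omega_X,\O_X)$ to the existence of a flat lifting of $X$ to $W_2(k)$ is zero; a flat lifting $\tilde{X}$ of a smooth $k$-scheme is automatically smooth over $W_2(k)$, so this suffices. Put $m:=\sht(X)<\infty$. By Remark~\ref{E_m}, the equality $\sht(X)=m$ means exactly that the extension $\mathcal{E}_m$ splits, so that its class in $\Ext^1(B_m\Omega_X,\O_X)$ is zero; moreover, again by Remark~\ref{E_m}, $\mathcal{E}_m$ is the pull-back of $\mathcal{E}_1$ along the iterated Cartier operator $C^{m-1}\colon B_m\Omega_X\to B_1\Omega_X$. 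On the other hand, Proposition~\ref{def theory}(iii), rephrased through the Yoneda pairing in the corollary that follows it, identifies $\obs_X$ with the Yoneda product $(\obs_{X,F},\mathcal{E}_1)$, where $\obs_{X,F}\in\Ext^1(\Omega_X,B_1\Omega_X)$ is the class of the Cartier sequence $0\to B_1\Omega_X\to Z_1\Omega_X\xrightarrow{C}\Omega_X\to0$. So once I realize $\obs_{X,F}$ as a push-forward along the same map $C^{m-1}$, a projection-formula argument will finish the proof.

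To this end I would first record the exact sequence of $\O_X$-modules
\[
0\to B_m\Omega_X\to Z_m\Omega_X\xrightarrow{\ C^m\ }\Omega_X\to0,
\]
obtained by iterating the Cartier sequence (using $B_{m+1}\Omega_X=C^{-1}(B_m\Omega_X)$, $Z_{m+1}\Omega_X=C^{-1}(Z_m\Omega_X)$ and surjectivity of $C$), and denote its class by $\beta_m\in\Ext^1(\Omega_X,B_m\Omega_X)$; for $m=1$ this is $\obs_{X,F}$. The iterated Cartier operator $C^{m-1}$ carries $Z_m\Omega_X$ into $Z_1\Omega_X$, restricts on the subsheaf $B_m\Omega_X$ to $C^{m-1}\colon B_m\Omega_X\to B_1\Omega_X$, and is compatible with the maps $C^m$ and $C$ onto $\Omega_X$. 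Thus $(C^{m-1},C^{m-1},\mathrm{id}_{\Omega_X})$ is a morphism of extensions from the $m$-th Cartier sequence to the first, which exhibits the first as the push-forward of the $m$-th along $C^{m-1}$; that is, $(C^{m-1})_{\ast}\beta_m=\obs_{X,F}$.

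Then I combine these inputs with the associativity of the Yoneda product and the standard identity $(g_{\ast}\xi,\eta)=(\xi,g^{\ast}\eta)$ for composable $\Ext$-classes:
\[
\obs_X=(\obs_{X,F},\mathcal{E}_1)=\big((C^{m-1})_{\ast}\beta_m,\mathcal{E}_1\big)=\big(\beta_m,(C^{m-1})^{\ast}\mathcal{E}_1\big)=(\beta_m,\mathcal{E}_m)=0,
\]
the last equality because $\mathcal{E}_m$ splits. Hence $\obs_X=0$, so $X$ admits a flat lifting $\tilde{X}$ to $W_2(k)$, and $\tilde{X}\to\Spec W_2(k)$ is smooth since $X$ is smooth over $k$.

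I expect the only genuinely delicate point to be the bookkeeping with the $\O_X$-module structures on $B_m\Omega_X$ and $Z_m\Omega_X$ (coming from the $p^m$-power Frobenius twists): one must check that the $m$-th Cartier sequence above is exact as a sequence of $\O_X$-modules, that $C^{m-1}$ is $\O_X$-linear on it, and that the resulting morphism of extensions is $\O_X$-linear, and relatedly pin down the Cartier power $C^m$ appearing in the sequence and in the push-forward. The homological ingredients are either supplied by the excerpt (the identity $\obs_X=(\obs_{X,F},\mathcal{E}_1)$ and the fact that $\mathcal{E}_m=(C^{m-1})^{\ast}\mathcal{E}_1$) or are entirely standard ($\Ext$-functoriality, associativity of the Yoneda product, the push/pull adjunction), and the final passage from $\obs_X=0$ to the existence of a smooth lifting is routine deformation theory.
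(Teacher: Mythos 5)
Your proposal is correct and follows essentially the same route as the paper: you identify $\obs_{X,F}$ as the push-forward $(C^{m-1})_{\ast}\beta_m$ of the class $\beta_m$ of the $m$-th Cartier sequence, use the projection formula for the Yoneda pairing to trade this against $(C^{m-1})^{\ast}\mathcal{E}_1=\mathcal{E}_m$, and conclude $\obs_X=(\beta_m,\mathcal{E}_m)=0$ since $\mathcal{E}_m$ splits by $\sht(X)=m$. This is exactly the paper's argument (with $x$ there playing the role of your $\beta_m$).
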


\begin{proof}
Let $\sht(X) = m < \infty$.
By the above remark, the extension class $\mathcal{E}_m$ is zero in $\Ext^1(B_m\Omega_X, \O_X)$.
We have the following diagram:
\[
\begin{xy}
(2, 0) *{\Ext ^1(\Omega_X, B_1\Omega_X)}, 
(30, 0)*{\otimes \Ext^1(B_1\Omega_X, \O_X)},
(65, 0) *{\Ext^2(\Omega_X, \O_X).}, 
(1, 10) *{\Ext ^1(\Omega_X, B_m\Omega_X)}, 
(30, 10)*{\otimes \Ext^1(B_m\Omega_X, \O_X)},
(65, 10) *{\Ext^2(\Omega_X, \O_X)}, 
\ar @{=} (65, 3); (65, 7)
\ar @{>} (46, 0); (53, 0)_{(-, -)}
\ar @{>} (46, 10); (53, 10)^{(-, -)}
\ar @{>} (10, 7) ; (10, 3)_{C_{\ast}^{m-1}}
\ar @{>} (20, 3) ;(20, 7)_{(C^{m-1})^{\ast}}
\end{xy}
\]
For any $x \in \Ext ^1(\Omega_X, B_m\Omega_X)$ and $y \in \Ext^1(B_1\Omega_X, \O_X)$, we have
\[
(C_{\ast}^{m-1}x, y) = (x, (C^{m-1})^{\ast}y).
\]
In particular, we have
\[
(C_{\ast}^{m-1}x, \mathcal{E}_1) = (x, \mathcal{E}_m).
\]

Consider the following commutative diagram 
\[
\xymatrix{
0 \ar[r] & B_m\Omega_X \ar[r] \ar[d]_{C^{m-1}}& Z_m\Omega_X \ar[r]^{C^m} \ar[d]_{C^{m-1}}& \Omega_X \ar[r] \ar@{=}[d]& 0 \\
0 \ar[r] & B_1\Omega_X \ar[r] & Z_1\Omega_X \ar[r]^{C} & \Omega_X \ar[r] & 0.
}
\]
Let $x \in \Ext ^1(\Omega_X, B_m\Omega_X)$ be the extension class defined by the upper exact sequence.
By Proposition \ref{def theory}, the lower exact sequence defines the obstruction class $\obs_{(X, F)} \in \Ext^1(\Omega_X, B_1\Omega_X)$.
This means $\obs_{(X, F)} = C_{\ast}^{m-1}x$.
Again by the same proposition, we see that
\begin{align*}
\obs_X &= (\obs_{(X, F)}, \mathcal{E}_1) \\
&= (C_{\ast}^{m-1}x, \mathcal{E}_1) \\
&= (x, \mathcal{E}_m) = 0.
\end{align*}
This completes the proof.
\end{proof}

\begin{Thm}
For a Calabi-Yau variety $X$ over $k$, we have 
\begin{align*}
\height(X) = \sht(X).
\end{align*}
\end{Thm}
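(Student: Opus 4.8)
The plan is to translate both invariants into the minimal level at which the absolute Frobenius exhibits a certain behaviour on the Witt vector cohomology modules $M_m := H^n(X, W_m\O_X)$, where $n := \dim X$ (and $M := H^n(X, W\O_X)$), and then to compare them by means of the theorem of van der Geer and Katsura recalled in \S3 (\cite{GK}). I use the letters $F$, $V$, $R$ also for the maps induced on cohomology, and I assume $n \geq 2$; the case $n = 1$, in which $X$ is an elliptic curve, is classical and is commented on at the end.

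The main step would be the following splitting criterion: for every $m \geq 1$,
\[
\mathcal{E}_m\ \text{splits}\quad\Longleftrightarrow\quad \Ker(F\colon M_m \to M_m) \subseteq V(M_{m-1}),
\]
where $V\colon M_{m-1} \to M_m$ is induced by the Verschiebung (and $M_0 := 0$). By Remark~\ref{E_m}, the left-hand side means that the class $[\mathcal{E}_m]$ in $\Ext^1(B_m\Omega_X, \O_X)$ is zero. Since $X$ is smooth, $B_m\Omega_X$ is locally free, so $\Ext^1(B_m\Omega_X, \O_X) = H^1(X, (B_m\Omega_X)^\vee)$, and Serre duality together with $\omega_X \simeq \O_X$ identifies this with $H^{n-1}(X, B_m\Omega_X)^\vee$, the pairing sending a class $e$ and a class $\xi$ to $\delta_e(\xi) \in H^n(X, \O_X)$, where $\delta_e$ is the connecting homomorphism of the extension $e$; hence $\mathcal{E}_m$ splits precisely when $\delta_{\mathcal{E}_m}\colon H^{n-1}(X, B_m\Omega_X) \to H^n(X, \O_X)$ vanishes. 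As $\mathcal{E}_m$ is the pushout of Serre's exact sequence $0 \to W_m\O_X \xrightarrow{F} F_{\ast}W_m\O_X \xrightarrow{D_m} B_m\Omega_X \to 0$ along $R^{m-1}\colon W_m\O_X \to \O_X$, functoriality of connecting maps gives $\delta_{\mathcal{E}_m} = R^{m-1} \circ \delta$, with $\delta\colon H^{n-1}(X, B_m\Omega_X) \to M_m$ the connecting map of Serre's sequence. The Calabi--Yau vanishing $H^{i}(X, \O_X) = 0$ for $0 < i < n$ forces $H^{n-1}(X, W_m\O_X) = 0$, so $\delta$ is injective with image $\Ker(F\colon M_m \to M_m)$; and the short exact sequence $0 \to W_{m-1}\O_X \xrightarrow{V} W_m\O_X \xrightarrow{R^{m-1}} \O_X \to 0$ yields $\Ker(R^{m-1}\colon M_m \to H^n(X, \O_X)) = V(M_{m-1})$. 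Combining these proves the criterion. On the other side, the theorem of van der Geer and Katsura says that $\height(X)$ is the least $m$ with $F \neq 0$ on $M_m$.

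Granting the criterion, $\sht(X) \geq \height(X)$ is immediate: if $m < \height(X)$ then $F = 0$ on $M_m$, so $\Ker(F\colon M_m \to M_m) = M_m$, while $M_m/V(M_{m-1}) \simeq H^n(X, \O_X) \neq 0$, so $\mathcal{E}_m$ is not split; this also gives $\sht(X) = \infty$ when $\height(X) = \infty$. For the reverse inequality, let $h := \height(X) < \infty$; I must show $\Ker(F\colon M_h \to M_h) \subseteq V(M_{h-1})$. From the definitions of $F$ and $V$ on Witt vectors, $V \circ F = F \circ V$ as sheaf maps $W_{h-1}\O_X \to W_h\O_X$, hence $F \circ V = V \circ F$ as maps $M_{h-1} \to M_h$; since $h - 1 < \height(X)$, van der Geer and Katsura give $F = 0$ on $M_{h-1}$, so $F \circ V = 0$ and therefore $V(M_{h-1}) \subseteq \Ker(F\colon M_h \to M_h)$. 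It remains to compare lengths. From the tail of the cohomology sequence of Serre's sequence, $M_h/F(M_h) \simeq H^n(X, B_h\Omega_X)$, which by Corollary~\ref{dim formula} has dimension $h - 1$; together with $\length M_h = h$ (obtained by induction from the exact sequences $0 \to H^n(X, \O_X) \to M_m \to M_{m-1} \to 0$) this gives $\length \Ker(F\colon M_h \to M_h) = h - 1$, while the injectivity of $V\colon M_{h-1} \to M_h$ gives $\length V(M_{h-1}) = \length M_{h-1} = h - 1$. Two submodules of a module, one contained in the other and of the same finite length, coincide, so $V(M_{h-1}) = \Ker(F\colon M_h \to M_h)$; in particular $\mathcal{E}_h$ splits and $\sht(X) \leq h$. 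Hence $\sht(X) = \height(X)$.

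The heart of the argument, and the main obstacle, is the splitting criterion of the second paragraph: matching the connecting homomorphism of the pushed-out Serre sequence with the Frobenius action on $M_m$ through Serre duality, and treating the degree-$(n-1)$ cohomology correctly. The case $n = 1$ genuinely differs there, since then $H^{n-1}(X, W_m\O_X) = H^0(X, W_m\O_X) = W_m(k) \neq 0$; but $F$ is bijective on it because $k$ is perfect, so $D_m$ annihilates $H^0(X, F_{\ast}W_m\O_X)$ and the conclusion of the criterion, and hence the rest of the proof, is unchanged.
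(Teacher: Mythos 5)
Your proof is correct, and it takes a genuinely different route from the paper's.

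The paper proves the equivalence $\height(X)\leq m \iff [\mathcal{E}_m]=0$ entirely inside the world of the sheaves $B_m\Omega_X$: it uses the short exact sequence $0 \to F_{\ast}B_{m-1}\Omega_X \xrightarrow{\iota} B_m\Omega_X \xrightarrow{C^{m-1}} B_1\Omega_X \to 0$, passes to the long exact $\Ext(-,\O_X)$ sequence, identifies all the $\Ext^i$'s as $H^{n-i}(X, B_\bullet\Omega_X)^{\vee}$ by Serre duality, notes that $\Ext^1(B_1\Omega_X,\O_X)$ is one-dimensional and spanned by $[\mathcal{E}_1]$, and then reads off the answer from the dimension count of Corollary~\ref{dim formula}: $[\mathcal{E}_m]=(C^{m-1})^\ast[\mathcal{E}_1]$ vanishes iff $\iota^\ast$ is an isomorphism iff $\min\{m,\height(X)-1\}=\min\{m-1,\height(X)-1\}$ iff $\height(X)\leq m$. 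Your argument instead pushes the splitting question directly into Witt vector cohomology $M_m=H^n(X,W_m\O_X)$, where $F$, $V$, and $\height$ all live natively. The splitting criterion $[\mathcal{E}_m]=0\iff\Ker(F\colon M_m\to M_m)\subseteq V(M_{m-1})$ is a clean reformulation obtained by Serre duality, the identification of the Serre-duality pairing with the connecting map of the extension, functoriality of connecting maps under pushout, and the vanishing $H^{n-1}(X,W_m\O_X)=0$; from there, van der Geer--Katsura plus a length count finish the job. Both proofs ultimately invoke Corollary~\ref{dim formula} (yours needs it for $\dim H^n(X,B_h\Omega_X)=h-1$), but your version makes explicit how the obstruction to splitting $\mathcal{E}_m$ is located inside $M_m$ relative to the Verschiebung filtration, which is arguably more transparent, whereas the paper's is shorter because it offloads the work to the dimension formula twice. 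Your treatment of the edge cases ($n=1$ via perfectness of $k$; $\height=\infty$) is also correct, and the identification $\Ker(R^{m-1}\colon M_m\to H^n(X,\O_X))=V(M_{m-1})$ via the sequence $0\to W_{m-1}\O_X\xrightarrow{V}W_m\O_X\xrightarrow{R^{m-1}}\O_X\to0$ is exactly the right tool.

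One small, harmless remark: the assertion that $B_m\Omega_X$ is locally free is true but not needed, since Serre duality in the form $\Ext^i(\mathcal{F},\O_X)\cong H^{n-i}(X,\mathcal{F})^{\vee}$ (using $\omega_X\simeq\O_X$) holds for any coherent $\mathcal{F}$ on a smooth proper variety.
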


\begin{proof}
It is enough to show that, for any $m \geq 1$, $\height (X) \leq m$ if and only if $\sht (X) \leq m$, i.e., the extension class $\mathcal{E}_m$ is zero.
Consider the exact sequence
\begin{align*}
0 \to F_{\ast }B_{m-1}\Omega _X \xrightarrow{\iota} B_{m}\Omega _X \xrightarrow{C^{m-1}} B_1\Omega _X \to 0.
\end{align*}
Then we have an exact sequence
\begin{align*}
\Ext^1(B_1\Omega _X, \O_X) \xrightarrow{(C^{m-1})^{\ast}} \Ext^1(B_{m}\Omega _X, \O_X) \xrightarrow{\iota^{\ast}} \Ext^1(F_{\ast }B_{m-1}\Omega _X, \O_X) \\ 
\to \Ext^2(B_1\Omega _X, \O_X).
\end{align*}
Recall that $\mathcal{E}_1 \in \Ext^1(B_1\Omega_X, \O_X)$ and $(C^{m-1})^{\ast}(\mathcal{E}_1) = \mathcal{E}_m$.
By the Serre Duality and triviality of the canonical bundle, we see $\Ext^i(B_m\Omega _X, \O_X) \simeq H^{n-i}(X, B_m\Omega _X)^{\vee}$.
Here $n$ denotes the dimension of $X$.
Similarly, since $F$ is finite, we compute as
\[
\Ext^i(F_{\ast }B_{m-1}\Omega _X, \O_X) \simeq H^{n-i}(X, F_{\ast}B_{m-1}\Omega _X)^{\vee} \simeq H^{n-i}(X, B_{m-1}\Omega _X)^{\vee}.
\] 
By Corollary \ref{dim formula}, the morphism $\iota^{\ast}$ is surjective for any $m$.

Since Frobenius-splitting is the same as the Artin-Mazur height being one, using Corollary \ref{dim formula}, we see that $\Ext^1(B_1\Omega _X, \mathcal{O}_X)$ is generated by the class of $\mathcal{E}_1$.
This implies that $\mathcal{E}_m$ is zero if and only if $\iota^{\ast}$ is an isomorphism.
By the same Corollary \ref{dim formula}, this is equivalent to $\height(X) \leq m$.
\end{proof}

\begin{Rem}
One of the most important properties of a Frobenius-split variety $X$ is that, for an ample line bundle $\mathcal{L}$ on $X$, we have $H^i(X, \mathcal{L}) = 0$ for $i>0$ (\cite{MR}).
The same property holds for quasi-Frobenius-split varieties.
\end{Rem}

\section*{Acknowledgments}
The author would like to express his sincere gratitude to his advisor Professor Nobuo Tsuzuki.
He thanks Professor Joshi Kirti for informing him the conjecture on the lifting of Calabi-Yau threefolds and explaining him the relation between the conjecture and this work.
He also thanks Professor Yukiyoshi Nakkajima and the referee for their careful readings of the manuscript and useful suggestions.
He was supported by Grant-in-Aid for JSPS Fellow 15J05073.

\bigskip
\bigskip
\par\noindent
Fuetaro Yobuko
\par\noindent
Mathematical Institute, 
Tohoku University,
6-3 Aoba  Aramaki Aoba-Ku Sendai,  
Miyagi 980--8578, Japan. 
\par \noindent
(Current address: Graduate School of Mathematics, Nagoya University, 
Furocho, Chikusaku, Nagoya, 464-8602, Japan.)
\par\noindent
\textit{E-mail address}:
soratobumusasabidesu@gmail.com

\end{document}